 \author[Y. Numata]{NUMATA, Yasuhide}
 \address[Numata]{Department of Mathematical Sciences\\
 Shinshu University\\
 3-1-1 Asahi, Matsumoto-shi, Nagano-ken, 390-8621, Japan.}
 \email{nu@math.shinshu-u.ac.jp}
 \thanks{This work was supported by JSPS KAKENHI Grant Number 25800009}
 \subjclass[2010]{18A99, 18A32, 18D99}
\keywords{Association schemes; Bose--Mesner algebras; Quasi-schemoids}
\title[Construction of schemoids]{Construction of schemoids from posets}
\newcommand{\eel}{\mathcyr{l}}
\newcommand{\CCC}{\mathcal{C}}
\theoremstyle{plain}   
\newtheorem{thm}{Theorem}[section]
\newtheorem{theorem}[thm]{Theorem}
\newtheorem{lemma}[thm]{Lemma}
\theoremstyle{remark}  
\newtheorem{remark}[thm]{Remark}
\newtheorem{example}[thm]{Example}
\newtheorem*{ackn}{Acknowledgments}
\theoremstyle{definition}  
\newtheorem{definition}[thm]{Definition}
\begin{document}
\begin{abstract}
A schemoid is 
a generalization of 
association schemes
from the point of view of small categories.
In this article,
we discuss schemoid structures for two kinds of small categories;
the canonical small category defined by a poset,
and another small category which arises a poset.
We also discuss the schemoid algebra,
that is an analogue of the Bose--Mesner algebra for an association scheme,
for them.
\end{abstract}
\maketitle

\newcommand{\Stab}{\operatorname{Stab}}
\newcommand{\Obj}{\operatorname{Obj}}
\newcommand{\Mor}{\operatorname{Mor}}
\newcommand{\Hom}{\operatorname{Hom}}
\newcommand{\id}{\operatorname{id}}
\newcommand{\FF}{\mathbb{F}}
\newcommand{\KK}{\mathbb{K}}
\newcommand{\defit}[1]{\textit{#1}}
\section{Introduction}
We call  a pair of a finite set $X$ and a partition
 $S=\Set{R_0,R_1,\ldots,R_n}$ of $X\times X$ 
an \defit{$n$-class association scheme} if
$S$ satisfies the following:
\begin{enumerate}
 \item  \label{item:as:unit-condition}
      $R_0=\Set{(x,x)|x\in X}$.
 \item  \label{item:as:duality-condition}
	If $R\in S$, then $\Set{(y,x)|(x,y)\in R}\in S$. 
 \item \label{item:as:schemoid-condition}
       For $i,j,k\in\Set{0,1,\ldots,n}$, there exists $p_{i,j}^{k}$
       such that 
 \begin{align*}
  p_{i,j}^{k}=\#\Set{((x,y),(y,z))\in R_i\times R_j} 
 \end{align*}
       for every $(x,z)\in R_k$.
\end{enumerate}
Association schemes 
are introduced by Bose and Shimamoto \cite{MR0048772}
in their study of design of experiments. 
Since we can regard association schemes
as a generalization of combinatorial designs, groups, and so on (see also \cite{BannaiIto}),
many authors study association schemes 
 from the view point of algebraic combinatorics.
 Bose and Mesner introduced
an algebra
which arises an association scheme \cite{MR0102157}.
The algebra is called the Bose--Mesner algebra,
and plays an important role for algebraic study for association schemes.
In \cite{MR3318282},
Kuribayashi and Matsuo 
introduced an association schemoid and a quasi-schemoid, 
which are generalizations of an association scheme
from the viewpoint of small categories.
A quasi-schemoid, we call it a schemoid for short in this paper, 
is defined to be a pair of small category 
$\CCC$ and a partition $S$ of the morphisms of $\CCC$ 
satisfying the following condition:
\begin{itemize}
 \item  $\Set{ (f,g)\in\sigma\times\tau | f\circ g=h}$
and
 $\Set{ (f,g)\in\sigma\times\tau | f\circ g=k}$ 
have the same cardinarity for $\sigma$, $\tau$, $\mu\in S$
and $h$, $k\in \mu$.
\end{itemize}
Let $(X,\Set{R_0,R_1,\ldots,R_n})$ be an $n$-class association scheme.
Consider the codiscrete goupoid $\CCC_X$ on $X$,
i.e., the small category such that 
$\Obj(\CCC_X)=X$ and $\Hom_{\CCC_X}(x,y)=\Set{(y,x)}$.
For $R_i\in S$,
we define $\sigma_i$ to be the set of morphisms $(x,y)$ in $\CCC_X$
such that $(x,y)\in R_i$.
Since the compoisitions of morphisms in $\CCC_X$ 
are defined by $(x,y)\circ (y,z)=(x,z)$,
the pair $(\CCC_X,\Set{\sigma_0,\sigma_1,\ldots,\sigma_n})$ 
is a schemoid.
Hence we can identify an association scheme with a schemoid.
Moreover we can also construct a schemoid from a coherent
configuration in the same manner.
A schemoid requires the condition which is analogue of 
Condition \ref{item:as:schemoid-condition} 
in the definition of
an association scheme.
An association schemoid is a schemoid
satisfying the other conditions in the definition of
an association scheme.
Kuribayashi and Matsuo  
also introduce a subalgebra of the category algebra
which arises a schemoid.
The algebra is an analogue of 
the Bose--Mesner algebra for an association scheme.
Kuribayashi \cite{MR3306876} and 
Kuribayashi--Momose \cite{arxiv:1507.01745}
develop homotopy theory for schemoids.

The purpose of this article
is to give examples of schemoids.
In this paper,
we discuss schemoid structures for two kinds of small categories;
the canonical small category obtained from a poset,
and another acyclic small category obtained from a poset.
The organization of this article is the following:
We define schemoids and schemoid algebras
in Section \ref{sec:def}.
In Section \ref{sec:poset},
we discuss schemoid structures on 
two kinds of small categories which arises a poset.

\begin{ackn}
 The author thanks anonymous referees for their helpful comments.
\end{ackn}

\section{Definition}
\label{sec:def}
Here we recall the definition of small categories, 
schemoids and schemoid algebras.

First we recall small categories and functors.
A \defit{small category} $\CCC$
is a quintuple of the set $\Obj(\CCC)$ of objects,
 the set $\Mor(\CCC)$ of morphisms,
 maps $s\colon \Mor(\CCC)\to\Obj(\CCC)$ and 
 $t\colon \Mor(\CCC)\to\Obj(\CCC)$,
 and the operation $\circ$ of composition,
satisfying the following properties:
For each morphism $f\in \Mor(\CCC)$,
$s(f)$ is called the \defit{source} of $f$,
and $t(f)$ is called the \defit{target} of $f$.
A morphism $f$ is called an \defit{endomorphism}
if $s(f)=t(f)$.
For $x,y\in \Obj(\CCC)$,
define $\Hom_{\CCC}(x,y)=\Set{f\in\Mor(\CCC)|s(f)=x,\ t(f)=y}$.
A sequence $(f_n,f_{n-1},\ldots,f_1)$ of morphisms
is called a \defit{nerve}
if $t(f_i)=s(f_{i+1})$ for $i=1,\ldots,n-1$.
The composition  $g\circ f$ is defined for
each nerve $(g,f)$ of length $2$.
The composition $g\circ f$ is in $\Hom_{\CCC}(x,z)$
for $g\in \Hom_{\CCC}(y,z)$ and $f\in \Hom_{\CCC}(x,y)$.
Moreover the operation satisfies $(h\circ g)\circ f=h\circ (g\circ f)$
for every nerve $(h,g,f)$ of length $3$.
For $x\in \Obj(\CCC)$, a morphism from $x$ to $x$ is called an
\defit{endomorphism} on $x$.
For each $x\in \Obj(\CCC)$,
there uniquely exists 
an endomorphism $\id_x$ on $x$ such that
$\id_x\circ f=f$ for every $f$ with $t(f)=x$
and 
$g\circ \id_x=g$ for every $g$ with $s(g)=x$.
The morphism $\id_x$ is called the \defit{identity} on $x$.

Let $\CCC$ and $\CCC'$ be small categories.
We call a pair $\varphi=(\varphi^{\Obj},\varphi^{\Mor})$
a \defit{functor} from $\CCC$ to $\CCC'$
if 
the map $\varphi^{\Obj}$  from $\Obj(\CCC)$ to $\Obj(\CCC')$
and 
the map $\varphi^{\Mor}$  from $\Mor(\CCC)$ to $\Mor(\CCC')$
satisfy the following:
\begin{enumerate}
\item $\varphi^{\Mor}(\id_x)=\id_{\varphi^{\Obj}(x)}$ for each $x\in\Obj(\CCC)$.
\item $\varphi^{\Mor}(f\circ g)=\varphi^{\Mor}(f)\circ \varphi^{\Mor}(g)$
      for all nerve $(f,g)$ of length $2$ in $\CCC$.
\end{enumerate}

Next we define schemoids.
In this paper, we define a schemoid as the pair of a small category
$\CCC$ and a map $\eel$  from the set $\Mor(\CCC)$ of morphisms to a set $I$.
For a map $\eel$, we obtain 
a partition $\Set{\eel^{-1}(\Set{i})|i\in I})$ of $\Mor(\CCC)$.
On the other hand,
for a partition $S$ of $\Mor(\CCC)$,
we obtain the canonical surjection from $\Mor(\CCC)$ to $S$.
Via this translation, 
the following definition is equivalent to the original definition of a schemoid.
\begin{definition}
\label{def:schemoid:c}
Let $\CCC$ be  a small category, 
$I$  a set,
and $\eel$ a map from the set $\Mor(\CCC)$ of morphisms in $\CCC$ to 
the set $I$.
For $i,j\in I$ and $h\in\Mor(\CCC)$,
we define $N_{h}^{i,j}$ to be
\begin{align*}
 \Set{ (f,g)\in \Mor(\CCC)\times \Mor(\CCC)| \begin{array}{c}\eel(f)=i,\\ \eel(g)=j,\\  f\circ g = h.\end{array}}  .
\end{align*}
We call the triple $(\CCC,I,\eel)$ a \defit{schemoid}
if 
\begin{align*}
\eel(h)=\eel(k) \implies
\text{$N_{h}^{i,j}$ and $N_{k}^{i,j}$ have the same cardinarity}
\end{align*}
for each $i,j\in I$ and $h,k\in\Mor(\CCC)$.
\end{definition}
For a morphism $f$ of a small category $\CCC$,
we write $\CCC_f$ to denote
the minimum subcategory of $\CCC$
such that $ \Mor(\CCC_f)$ contains
\begin{align*}
\Set{g\in \Mor(\CCC)| 
\text{$f_1\circ g\circ f_2=f$ for some $f_1,f_2\in \Mor(\CCC)$}}.
\end{align*}
Then we can show the following lemma.
\begin{lemma}
\label{lemma:bijectionimpliesschemoid}
Let $\CCC$ be a small category which does not contain
any endomorphism except identities.
Let $\eel$ be a map from the set $\Mor(\CCC)$ of morphisms to a set $I$.
If the following condition holds,
then $(\CCC, I,\eel)$ is a schemoid:
For all morphisms $f$ and $g$ such that $\eel(f)=\eel(g)$,
there exists a functor $\varphi_{f,g}$ from $\CCC_f$ to $\CCC_g$ such that
\begin{enumerate}
 \item $\varphi^{\Mor}_{f,g}$ is a bijection;
 \item  $\eel(f')=\eel(\varphi^{\Mor}_{f,g}(f'))$ for each morphism $f'$ in $\CCC_f$; and
 \item \label{cond:3} $\varphi^{\Mor}_{f,g}(f)=g$.
\end{enumerate}
\end{lemma}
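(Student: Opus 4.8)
The plan is to verify Definition~\ref{def:schemoid:c} directly: fix $i,j\in I$ and morphisms $h,k$ with $\eel(h)=\eel(k)$, and construct a bijection between $N_{h}^{i,j}$ and $N_{k}^{i,j}$.

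The first step is the elementary remark that $N_{h}^{i,j}$ is ``supported on'' the subcategory $\CCC_h$: if $f'\circ g'=h$, then $f'$ lies in the set generating $\CCC_h$ (take $f_1=\id_{t(f')}$ and $f_2=g'$), and so do $g'$ (take $f_1=f'$ and $f_2=\id_{s(g')}$) and $h$ itself (take $f_1=\id_{t(h)}$ and $f_2=\id_{s(h)}$). Hence $N_{h}^{i,j}\subseteq\Mor(\CCC_h)\times\Mor(\CCC_h)$ with $h\in\Mor(\CCC_h)$, and symmetrically $N_{k}^{i,j}\subseteq\Mor(\CCC_k)\times\Mor(\CCC_k)$ with $k\in\Mor(\CCC_k)$.

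The second step is to move $N_{h}^{i,j}$ onto $N_{k}^{i,j}$ along the functor $\varphi:=\varphi_{h,k}\colon\CCC_h\to\CCC_k$ supplied by the hypothesis for the pair $(h,k)$, via $(f',g')\mapsto(\varphi^{\Mor}(f'),\varphi^{\Mor}(g'))$. Because $(f',g')$ is a nerve of length $2$ in $\CCC_h$, functoriality makes $(\varphi^{\Mor}(f'),\varphi^{\Mor}(g'))$ a nerve of length $2$ in $\CCC_k$ with $\varphi^{\Mor}(f')\circ\varphi^{\Mor}(g')=\varphi^{\Mor}(f'\circ g')=\varphi^{\Mor}(h)=k$ by condition~(3), while condition~(2) gives $\eel(\varphi^{\Mor}(f'))=i$ and $\eel(\varphi^{\Mor}(g'))=j$; so the image pair indeed lies in $N_{k}^{i,j}$, and the assignment is injective because $\varphi^{\Mor}$ is (condition~(1)). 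Carrying out the same construction for the pair $(k,h)$ gives an injection in the opposite direction, and the Cantor--Schr\"oder--Bernstein theorem then forces $\#N_{h}^{i,j}=\#N_{k}^{i,j}$, which is exactly the schemoid condition. (If one prefers an explicit inverse: since $\varphi^{\Mor}$ is a bijection carrying identities to identities, $\varphi^{\Obj}$ is injective, and one checks that pulling a pair $(a,b)\in N_{k}^{i,j}$ back through $\varphi^{\Mor}$ yields a composable pair in $\CCC_h$ with composite $h$.)

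I do not expect a conceptual obstacle; the one thing to be careful about is the bookkeeping of the first two steps, namely that every morphism appearing in $N_{h}^{i,j}$ is genuinely a morphism of $\CCC_h$ (so that $\varphi^{\Mor}$ may legitimately be applied to it) and that the transported pair is again a length-$2$ nerve in $\CCC_k$ with the right composite. The hypothesis that $\CCC$ has no endomorphism other than identities is one of the standing assumptions of the lemma; in this argument it serves to keep the subcategories $\CCC_h$, and the functors between them, well-behaved.
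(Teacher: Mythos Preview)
Your argument is correct and follows the same route as the paper: apply $\varphi_{h,k}^{\Mor}$ componentwise to pairs in $N_h^{i,j}$ and observe that conditions (1)--(3) and functoriality land you in $N_k^{i,j}$. The paper compresses all of this into the single sentence ``$\varphi^{\Mor}_{h,k}$ induces a bijection from $N_{h}^{i,j}$ to $N_{k}^{i,j}$''; your write-up simply unpacks that sentence, and your Cantor--Schr\"oder--Bernstein shortcut (using $\varphi_{k,h}$ for the reverse injection) is a perfectly acceptable way to avoid checking surjectivity by hand, though your parenthetical sketch of the explicit inverse via injectivity of $\varphi^{\Obj}$ is also fine.
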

\begin{proof}
Let $h,k\in \Mor(\CCC)$ satisfy $\eel(h)=\eel(k)$.
The map $\varphi^{\Mor}_{h,k}$ induces 
a bijection from $N_{h}^{i,j}$ to $N_{k}^{i,j}$
for each $i,j\in I$.
Hence the triple $(\CCC,I,\eel)$ is a schemoid.
\end{proof}

For a small category  $\CCC$ and a field $\KK$,
define $\KK[\CCC]$ to be the $\KK$-vector space
whose basis is $\Mor(\CCC)$.
We define the product by
\begin{align*}
g\cdot f =\begin{cases}
	   g\circ f &s(g)=t(f)\\
	   0&s(g)\neq t(f)
	  \end{cases}
\end{align*}
for $f,g\in\Mor(\CCC)$.
Moreover, 
for $\sum_{f\in\Mor(\CCC)}\alpha_f f$
and
$\sum_{g\in\Mor(\CCC)}\beta_g g\in\KK[\CCC]$,
we define the product of them by 
\begin{align*}
(\sum_{f\in\Mor(\CCC)}\alpha_f f)\cdot (\sum_{g\in\Mor(\CCC)}\beta_g g)=\sum_{f\in\Mor(\CCC)}\sum_{g\in\Mor(\CCC)}(\alpha_f\beta_g) f\cdot g.  
\end{align*}
If $\Obj(\CCC)$ is a finite set,
then $\KK[\CCC]$ is a $\KK$-algebra with the unit 
$\sum_{x\in\Obj(\CCC)}\id_x$.
Let $\eel$ be a map from $\Mor(\CCC)$ to a set $I$.
Assume that $\eel^{-1}(\Set{i})$ is finite for every $i\in I$.
For $i \in I$,
we define $\overline{i}$
to be $\sum_{f\colon \eel(f)=i} f\in \KK[\CCC]$.
We define $\KK(\CCC,I,\eel)$ to be 
the vector subspace of $\KK[\CCC]$
spanned by $\Set{\overline{i} | i \in I}$.
For a schemoid $(C,I,\eel)$ such that $\eel^{-1}(\Set{i})$ is finite for every $i\in I$,
 $\KK(\CCC,I,\eel)$ is a subalgebra of $\KK[\CCC]$.
(The subalgebra  $\KK(\CCC,I,\eel)$ may not have the unit.)
We call $\KK(\CCC,I,\eel)$ a \defit{schemoid algebra}.
\section{Schemoids constructed from posets}
\label{sec:poset}
Here we consider two kinds of small categories defined from a poset.
The prototypical example of them is a schemoid structure for the
$n$-th Boolean lattice $2^{[n]}$, i.e.,
the poset consisting of all subsets of $\Set{1,\ldots,n}$
ordered by inclusion.
For $X,Y\in 2^{[n]}$,
we can consider the set difference $X\setminus Y$.
In \ref{sec:poset:natural},
we discuss a poset with the operation which is analogue of the operation
of set difference.
The operation induces a schemoid structure for 
the canonical small category obtained from the poset.
On the other hand,
for $X,Y\in 2^{[n]}$ with $X\cap Y = \emptyset$,
a greater element  $X\cup Y$ than $X$ 
is obtained from $X$ by adding $Y$.
By an analogue of the operation,
we introduce an acyclic small category 
obtained from a poset with some conditions
in \ref{sec:poset:ranked}.
The category has also a schemoid structure.

\subsection{Posets as a small category}
\label{sec:poset:natural}
Let $P$ be a poset with respect to $\leq$.
For $x,y\in P$,
we define the interval $[x,y]$ from $x$ to $y$
by $[x,y]=\Set{z|x\leq z \leq y}$.
We can naturally regard the poset $P$ 
as the following small category $\CCC_{P}$:
the set $\Obj(\CCC_P)$ of objects is $P$ and 
the set $\Mor(\CCC_P)$ of morphisms is the relation $\geq$,
i.e., $\Set{(y,x)|x\leq y}\subset P\times P$.
For $x\leq y\in P$,
$\Hom_{\CCC_P}(x,y)$ consists of $(y,x)$.
For $(y,x)\in\Hom_{\CCC_P}(x,y)$ and $(z,y)\in\Hom_{\CCC_P}(y,z)$,
it follows by definition that $x\leq z$.
We define the composition $(z,y)\circ(y,x)$ by $(z,y)\circ(y,x)=(z,x)$.
For $x\in P$,
$\id_x$ is $(x,x)$.

Here we consider a poset $P$ 
with a difference operation $\delta$ defined as follows:
\begin{definition}
Let $o$ be an element in the poset $P$,
and 
$\delta$ a map from the set $\Set{(y,x)\in P^2|x\leq y}$ to $ P$.
We say that $\delta$ is a \defit{difference operation with the base point} $o$
if
there exists
a family 
\begin{align*}
\Set{\varphi_{x,y}\colon [x,y] \to [o,\delta(y,x)] | x\leq y} 
\end{align*}
 of maps
satisfying the following:
\begin{enumerate}
 \item 
       Each $\varphi_{x,y}$ is a bijection
       from the interval $[x,y]$ to the interval $[o,\delta(y,x)]$.
 \item 
       \label{item:difference:consistent}
       $\delta(o,\varphi_{x,y}(z))=\delta(x,z)$ for $x\leq z  \leq y$.
\end{enumerate}
\end{definition}
 Let $\delta$ be a difference operation of poset $P$.
 In this case, we have bijections $\varphi_{x,y}$.
 If we fix an interval $[x,y]$, then 
 we can translate each element  in the interval $[x,y]$
 into some interval from the base point $o$ via the bijection $\varphi_{x,y}$.
 Fix $x\in P$ and consider two intervals $[x,y]$ and $[x,y']$.
 For $z\in [x,y]\cap [x,y']$,
 it follows by Condition \ref{item:difference:consistent} that
 $\delta(o,\varphi_{x,y}(z))=\delta(o,\varphi_{x,y'}(z))$.
 In this sense,
 Condition \ref{item:difference:consistent} implies that
 the translation depends not on the interval but only on the minimum of the interval.

Since the difference operation induces functors $\varphi_{f,g}$
from $(\CCC_P)_f$ to $(\CCC_P)_g$,
Theorem \ref{thm:direfence} 
follows from Lemma \ref{lemma:bijectionimpliesschemoid}.
\begin{theorem}
\label{thm:direfence}
For a poset $P$ with the difference operation $\delta$,
the triple  $(\CCC_P,P,\delta)$ is a schemoid.
\end{theorem}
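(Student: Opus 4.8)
The plan is to obtain Theorem~\ref{thm:direfence} as an application of Lemma~\ref{lemma:bijectionimpliesschemoid}, so the first task is to check the hypotheses of that lemma. In $\CCC_P$ a morphism $(y,x)$ has source $x$ and target $y$; hence an endomorphism forces $x=y$, and the only endomorphism on $x$ is $(x,x)=\id_x$. Thus $\CCC_P$ has no endomorphisms besides identities, and it remains, given two morphisms $f,g\in\Mor(\CCC_P)$ with $\delta(f)=\delta(g)$, to produce a functor $\varphi_{f,g}\colon(\CCC_P)_f\to(\CCC_P)_g$ that is a bijection on morphisms, preserves the value of $\delta$ on every morphism, and sends $f$ to $g$.

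Before constructing $\varphi_{f,g}$ I would identify the subcategory $(\CCC_P)_f$. Write $f=(y,x)$ with $x\le y$. A morphism $(b,a)$ of $\CCC_P$ can be written as $f_1\circ(b,a)\circ f_2=f$ for suitable $f_1,f_2$ precisely when $(y,b)\circ(b,a)\circ(a,x)=(y,x)$ is defined, i.e. precisely when $x\le a\le b\le y$; and this set of morphisms is already closed under composition and contains all identities $(a,a)$ with $a\in[x,y]$. Hence $(\CCC_P)_f$ is exactly the canonical small category $\CCC_{[x,y]}$ of the interval $[x,y]$: its objects are the elements of $[x,y]$ and its morphisms are the pairs $(b,a)$ with $x\le a\le b\le y$.

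Now I construct $\varphi_{f,g}$. Put $f=(y,x)$ and $g=(y',x')$; the assumption $\delta(f)=\delta(g)$ says $\delta(y,x)=\delta(y',x')$, and I write $c$ for this common element of $P$. The difference operation supplies bijections $\varphi_{x,y}\colon[x,y]\to[o,c]$ and $\varphi_{x',y'}\colon[x',y']\to[o,c]$, so I set $\Phi=\varphi_{x',y'}^{-1}\circ\varphi_{x,y}\colon[x,y]\to[x',y']$ and let $\varphi_{f,g}$ act as $a\mapsto\Phi(a)$ on objects and as $(b,a)\mapsto(\Phi(b),\Phi(a))$ on morphisms. Granting that this is a well-defined functor, the three required properties are then quick: $\varphi^{\Mor}_{f,g}$ is a bijection because $\Phi$ is, and condition~\ref{cond:3} of Lemma~\ref{lemma:bijectionimpliesschemoid} holds provided $\Phi(x)=x'$ and $\Phi(y)=y'$, i.e. provided each $\varphi_{x,y}$ carries the endpoints $x$ and $y$ of $[x,y]$ to the endpoints $o$ and $c$ of $[o,c]$.

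The substantive content — and the step I expect to be the main obstacle — is to show that $\Phi$ actually defines a functor (equivalently, that $\Phi$ is order-preserving, so that it sends morphisms of $\CCC_{[x,y]}$ to morphisms of $\CCC_{[x',y']}$ and commutes with composition) and that it preserves $\delta$, namely $\delta(\Phi(b),\Phi(a))=\delta(b,a)$ for all $x\le a\le b\le y$. I expect this to follow from Condition~\ref{item:difference:consistent} together with the remark made just before the theorem, that $\varphi_{x,y}(z)$ depends only on the minimum $x$ of the interval, not on $y$. Concretely, one first uses the trivial intervals $[w,w]$ to see that $\delta(w,w)=o$ and then pins down the images of the endpoints, obtaining more generally that $\varphi_{x,y}(z)=\delta(z,x)$ for every $z\in[x,y]$; in particular each $\varphi_{x,y}$ is an order-isomorphism depending only on $x$, so $\Phi$ is an order-isomorphism $[x,y]\to[x',y']$ with $\Phi(x)=x'$ and $\Phi(y)=y'$, and $\delta(\Phi(b),\Phi(a))=\delta(\varphi_{x',y'}^{-1}\varphi_{x,y}(b),\varphi_{x',y'}^{-1}\varphi_{x,y}(a))$ reduces, via the same consistency, to $\delta(b,a)$. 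Making this chain of identities precise from the interval axioms is the one place where real work is needed; once it is done, all three conditions of Lemma~\ref{lemma:bijectionimpliesschemoid} are verified and the lemma yields that $(\CCC_P,P,\delta)$ is a schemoid.
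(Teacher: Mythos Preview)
Your approach is exactly the one the paper takes: the paper's entire argument is the single sentence that the difference operation induces functors $\varphi_{f,g}$ from $(\CCC_P)_f$ to $(\CCC_P)_g$, whence Lemma~\ref{lemma:bijectionimpliesschemoid} applies. You have supplied far more detail than the paper (identifying $(\CCC_P)_f$ with $\CCC_{[x,y]}$, building $\Phi=\varphi_{x',y'}^{-1}\circ\varphi_{x,y}$, and flagging the verification that $\Phi$ is an order-isomorphism preserving $\delta$ as the substantive point), but the strategy is identical.
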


\begin{example}
Let $P$ be the $n$-th Boolean lattice,
i.e., $2^{[n]}$ ordered by inclusion.
For $x\leq y \in P$,
we define $\delta(y,x)$
to be $y\setminus x$.
The map $\delta$ is a difference operation with
the base point $\emptyset$.
Hence $(\CCC_P,P,\delta)$ is a schemoid.
In this case,
the schemoid algebra 
$\KK(\CCC_P,P,\delta)$
is isomorphic to
$\KK[x_i|i\in P]/(x_i^2|i\in P)$.
\end{example}



\begin{example}
\label{example:boolean}
Let $P$ be 
a Coxeter groups
ordered by the Bruhat order.
For $x\leq y \in P$,
we define $\delta(y,x)$
to be $yx^{-1}$.
The map $\delta$ is a difference operation with
the base point $\varepsilon$.
Hence $(\CCC_P,P,\delta)$ is a schemoid.
In this case,
the schemoid algebra $\KK(\CCC_P,P,\delta)$ 
is isomorphic to
 the NilCoxeter algebra.
\end{example}

\begin{example}
\label{ex:simcomp}
Let $\Delta$ be a simplicial complex on the vertex set $V$.
Consider the lattice $P$ of faces of the simplicial complex $\Delta$.
(We regard $\emptyset$ as a face of $\Delta$.)
For $x,y\in P$,
we define $\delta(y,x)$
to be $y\setminus x$.
The map $\delta$ is a difference operation with
the base point $o=\emptyset$.
Hence $(\CCC_P,P,\delta)$ is a schemoid.
Let $I_{\Delta}$ be an ideal of $\KK[x_i|i\in V]$
generated by $\Set{x_{v_1}\cdots x_{v_l}|\Set{v_1,\ldots,v_l}\not\in\Delta}$.
The quotient ring $\KK[x_i|i\in V]/I_{\Delta}$ is called the
 \defit{Stanley--Reisner ring}.
Let $\tilde{I}_\Delta=I_{\Delta}+(x_i^2|i\in V)$.
The schemoid algebra $\KK(\CCC_P,P,\delta)$
is isomorphic to
$\KK[x_i|i\in V]/\tilde I_{\Delta}$.
\end{example}

\begin{remark}
 In Appendix of \cite{arxiv:1507.01745},
 Kuribayashi and Momose discuss schemoids in Example \ref{ex:simcomp}
 from the point of view of the category theory.
\end{remark}

\subsection{Yet another small category obtained from posets}
\label{sec:poset:ranked}
Here we introduce another kind of small categories 
obtained from a poset.
We also introduce a schemoid structure for it.

Let $P$ be a poset with respect to $\leq$.
Assume that  the number of minimal elements in 
$\Set{z\in P| x\leq z,\ y\leq z}$ is $1$ or $0$
for each pair $x,y \in P$.
We write $x\vee y$ to denote the minimum element in
$\Set{z\in P| x\leq z,\ y\leq z}$
if $\Set{z\in P| x\leq z,\ y\leq z}\neq \emptyset$.
Assume the following conditions:
\begin{enumerate}
 \item  $P$ has the minimum element $\hat 0$.
 \item $P$ is a ranked poset with the rank function $\rho$.
 \item $\rho(x\vee y)\leq\rho(x)+\rho(y)$ for $x,y\in P$.
\end{enumerate}
We define a small category $\tilde P$ whose set of objects is $P$.
For $x,y,d\in P$ such that $\rho(y)=\rho(x)+\rho(d)$ and $y=x\vee d$,
we define a morphism $f^d_{x,y}$ from $x$ to $y$.
Or equivalently,
\begin{align*}
 \Hom_{\tilde P}(x,y)=
 \Set{f^d_{x,y}|
\begin{array}{c}
d\in P.\\ y=x\vee d.\\ \rho(y)=\rho(x)+\rho(d).
\end{array}
} 
\end{align*}
for $x,y\in P$.
If $f^c_{x,y}$ and $f^{d}_{y,z}\in \Mor(\tilde P)$,
then $\rho(y)=\rho(x)+\rho(c)$ and $\rho(z)=\rho(y)+\rho(d)$.
Hence $\rho(z)=\rho(x)+\rho(c)+\rho(d)$.
Since $z=x\vee(c\vee d)$,
we have $\rho(x)+\rho(c)+\rho(d)=\rho(z)\leq \rho(x)+\rho(c\vee d)$. 
On the other hand, $\rho(x)+\rho(c\vee d)\leq \rho(x)+\rho(c)+\rho(d)$
since $\rho(c\vee d)\leq \rho(c)+\rho(d)$.
Hence $\rho(z)=\rho(x)+\rho(c\vee d)=\rho(x)+\rho(c)+\rho(d)$.
Since $f^{d\vee c}_{x,z}$ is in $\Mor(\tilde P)$,
we define the composition $f^{d}_{y,z} \circ f^c_{x,y}$
to be $f^{d\vee c}_{x,z}$.
\begin{example}
\label{example:ext}
Let $P=\Set{0,1,2}\times \Set{0,1}$.
For $(x,y),(x',y')\in P$,
$(x,y)\leq (x',y')$ if and only if
$x\leq x'$ and $y\leq y'$.
In this case,
the set of morphisms of $\tilde P$ 
consists of the following:
\begin{gather*}
 f^{(0,1)}_{(0,0),(0,1)},
 f^{(0,1)}_{(1,0),(1,1)},
 f^{(0,1)}_{(2,0),(2,1)},\\
 f^{(1,0)}_{(0,0),(1,0)},
 f^{(1,0)}_{(0,1),(1,1)},\\
 f^{(2,0)}_{(0,0),(2,0)},
 f^{(2,0)}_{(0,1),(2,1)},\\
 f^{(1,1)}_{(0,0),(1,1)},\\
 f^{(2,1)}_{(0,0),(2,1)},
\end{gather*}
and identities.
See also Figure \ref{fig:morphism}.
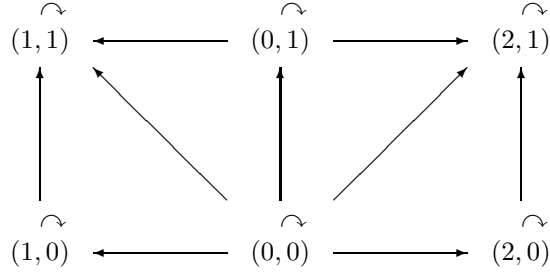
\begin{figure}
  \begin{center}
 \begin{picture}(200,110)
  \put(10,10){\makebox(0,0){$(1,0)$}}
  \put(10,90){\makebox(0,0){$(1,1)$}}
  \put(100,10){\makebox(0,0){$(0,0)$}}
  \put(100,90){\makebox(0,0){$(0,1)$}}
  \put(190,10){\makebox(0,0){$(2,0)$}}
  \put(190,90){\makebox(0,0){$(2,1)$}}
  \put(80,10){\vector(-1,0){50}}
  \put(120,10){\vector(1,0){50}}
  \put(100,30){\vector(0,1){50}}
  \put(80,30){\vector(-1,1){50}}
  \put(120,30){\vector(1,1){50}}
  \put(10,30){\vector(0,1){50}}
  \put(190,30){\vector(0,1){50}}
  \put(80,90){\vector(-1,0){50}}
  \put(120,90){\vector(1,0){50}}
  \put(10,20){\makebox(0,0)[bl]{$\curvearrowright$}}
  \put(10,100){\makebox(0,0)[bl]{$\curvearrowright$}}
  \put(100,20){\makebox(0,0)[bl]{$\curvearrowright$}}
  \put(100,100){\makebox(0,0)[bl]{$\curvearrowright$}}
  \put(190,20){\makebox(0,0)[bl]{$\curvearrowright$}}
  \put(190,100){\makebox(0,0)[bl]{$\curvearrowright$}}
 \end{picture}
    \caption{The small category in Example \ref{example:ext}}
    \label{fig:morphism}
  \end{center}
\end{figure}
\end{example}

\begin{theorem}
\label{thm:extendedposet}
For the map $\eel$ from $\Mor(\tilde P)$ to $P$
defined by $\eel(f^d_{x,y})=d$, 
the triple $(\tilde P,P,\eel)$ is a schemoid. 
\end{theorem}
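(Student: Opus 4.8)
The plan is to compute the sets $N_h^{i,j}$ directly rather than to route through Lemma \ref{lemma:bijectionimpliesschemoid}. The key observation is that in $\tilde P$ a morphism is determined by its label together with its source (its target is then forced to be a join), so each $N_h^{i,j}$ will contain at most one pair, and I will pin down exactly when it is nonempty in terms of $\eel(h)$, $i$, and $j$ alone; that is precisely the schemoid condition.

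So, fix $h=f^d_{x,y}$ and suppose $(f,g)\in N_h^{i,j}$. From $f\circ g=h$ one reads off $s(g)=x$, $t(f)=y$, and $s(f)=t(g)=:z$. Since $\eel(g)=j$, the morphism $g$ lies in $\Hom_{\tilde P}(x,z)$ and must therefore be $f^j_{x,z}$, which forces $z=x\vee j$ and $\rho(z)=\rho(x)+\rho(j)$; similarly $f=f^i_{z,y}$ with $y=z\vee i$ and $\rho(y)=\rho(z)+\rho(i)$. In particular $(f,g)$ is completely determined by $h$ (via $z=x\vee j$), so $\#N_h^{i,j}\le 1$. Using the composition rule of $\tilde P$, $f\circ g=f^{i\vee j}_{x,y}$, and comparing with $h=f^d_{x,y}$ gives $i\vee j=d$; comparing $\rho(y)=\rho(x)+\rho(i)+\rho(j)$ with $\rho(y)=\rho(x)+\rho(d)$ (valid since $h\in\Mor(\tilde P)$) gives $\rho(i)+\rho(j)=\rho(d)$. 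Hence $N_h^{i,j}\neq\emptyset$ already forces $i\vee j$ to exist with $i\vee j=\eel(h)$ and $\rho(i)+\rho(j)=\rho(i\vee j)$.

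Next I would check that these conditions are also sufficient. Assume $h=f^d_{x,y}\in\Mor(\tilde P)$, $i\vee j=d$, and $\rho(i)+\rho(j)=\rho(d)$. Then $y=x\vee d$ is a common upper bound of $x$ and $j$, so $z:=x\vee j$ exists with $z\le y$, and $y$ is a common upper bound of $z$ and $i$, so $z\vee i$ exists; since $z\vee i$ is then a common upper bound of $x$, $i$, $j$, it dominates $x\vee(i\vee j)=x\vee d=y$, whence $z\vee i=y$. The rank inequalities now collapse: by the hypothesis $\rho(u\vee v)\le\rho(u)+\rho(v)$, applied twice, together with $\rho(i)+\rho(j)=\rho(d)$,
\begin{align*}
\rho(y)\le\rho(z)+\rho(i)\le\rho(x)+\rho(j)+\rho(i)=\rho(x)+\rho(d)=\rho(y),
\end{align*}
so $\rho(z)=\rho(x)+\rho(j)$ and $\rho(y)=\rho(z)+\rho(i)$. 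Thus $f^j_{x,z}$ and $f^i_{z,y}$ are genuine morphisms of $\tilde P$ and $f^i_{z,y}\circ f^j_{x,z}=f^{i\vee j}_{x,y}=h$, so $(f^i_{z,y},f^j_{x,z})\in N_h^{i,j}$. Combining the two directions, $\#N_h^{i,j}=1$ when $i\vee j$ exists, $i\vee j=\eel(h)$, and $\rho(i)+\rho(j)=\rho(i\vee j)$, and $\#N_h^{i,j}=0$ otherwise --- a quantity depending only on $\eel(h)$, $i$, $j$ --- so $\eel(h)=\eel(k)$ implies $\#N_h^{i,j}=\#N_k^{i,j}$, as required.

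The bookkeeping in the first two paragraphs is routine unwinding of the construction of $\tilde P$; the step that genuinely needs care is the third one --- verifying that $x\vee j$ and $z\vee i$ exist, that $z\vee i=y$, and that the displayed chain of inequalities is forced to be a chain of equalities. As an alternative one could observe that $\tilde P$ has no endomorphisms besides identities, since $f^d_{x,x}\in\Mor(\tilde P)$ forces $\rho(d)=0$ and hence $d=\hat 0$, and then invoke Lemma \ref{lemma:bijectionimpliesschemoid}; but that route requires an explicit description of the subcategory $\tilde P_f$, which seems more cumbersome than the direct count above.
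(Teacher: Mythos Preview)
Your argument is correct and follows essentially the same route as the paper's proof: both hinge on the observation that a factorisation of $f^d_{x,y}$ through labels $i,j$ forces $d=i\vee j$ with $\rho(d)=\rho(i)+\rho(j)$, and that conversely, for any other $f^d_{x',y'}$, the chain of rank inequalities $\rho(y')\le\rho(x'\vee j)+\rho(i)\le\rho(x')+\rho(j)+\rho(i)=\rho(x')+\rho(d)=\rho(y')$ collapses, producing the required factorisation. The only difference is one of packaging: the paper phrases this as constructing a bijection $N_{f^d_{x,x\vee d}}^{d_1,d_2}\to N_{f^d_{x',x'\vee d}}^{d_1,d_2}$, whereas you go one step further and compute $\#N_h^{i,j}\in\{0,1\}$ explicitly, with the value depending only on $(i,j,\eel(h))$; your version is slightly more informative (and you are more careful than the paper about verifying that the joins $x\vee j$ and $z\vee i$ actually exist before using them), but the mathematical content is the same.
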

\begin{proof}
 Let $f_{x,x\vee d}^d$ and $f_{x',x'\vee d}^d$ be in $\Mor(\tilde P)$.
 In this case, it follows from the definition of morphisms in $\tilde P$ that
 \begin{align*}
  \rho(x\vee d)&=\rho(x)+\rho(d),\\
  \rho(x'\vee d)&=\rho(x')+\rho(d).
 \end{align*}
 Let $f_{x,x\vee d}^d=f_{x\vee d_1,x\vee d_1\vee d_2}^{d_2}\circ f_{x,x\vee d_1}^{d_1}$.
 In this case, we have $d=d_1\vee d_2$ and
 $\rho(d)=\rho(d_1)+\rho(d_2)$.
 If $\rho(x'\vee d_1)<\rho(x')+\rho(d_1)$ or 
 $\rho(x'\vee d_1\vee d_2)<\rho(x'\vee d_1)+\rho(d_2)$,
 then we have
 \begin{align*}
  \rho(x'\vee d_1\vee d_2)&<\rho(x')+\rho(d_1)+\rho(d_2)
  =\rho(x')+\rho(d_1\vee d_2),
 \end{align*}
which contradicts $  \rho(x'\vee d)=\rho(x')+\rho(d)$.
Hence  morphisms $f_{x'\vee d_1,x'\vee d_1\vee d_2}^{d_2}$
and $f_{x',x'\vee d_1}^{d_1}$ 
satisfies
$f_{x',x'\vee d}^d= f_{x'\vee d_1,x'\vee d_1\vee d_2}^{d_2}\circ f_{x',x'\vee d_1}^{d_1}$.
Therefore there exists a bijection between
$N_{f_{x,x\vee d}^d}^{d_1,d_2}$ and $N_{f_{x',x'\vee d}^d}^{d_1,d_2}$.
Hence the triple $(\tilde P,P,\eel)$ is a schemoid. 
\end{proof}

Now we discuss the schemoid algebra.
Consider the polynomial ring $\KK[X_x|x\in P]$ in variables corresponding
to elements in $P$. 
Define $G_i$ by
\begin{align*}
 G_0&=\Set{X_0-1}\\
 G_1&=\Set{X_xX_y|\rho(x\vee y)<\rho(x)+\rho(y)}\\
 G'_1&=\Set{X_xX_y|\Set{ z|z\geq x, z\geq y}=\emptyset}\\
 G_2&=\Set{X_xX_y-X_{x\vee y}|\rho(x\vee y)=\rho(x)+\rho(y)}.
\end{align*} 
Let $I$ be the ideal generated by $G_0\cup G_1\cup G'_1\cup G_2$, and $R_P$
the quotient ring $\KK[X_x|x\in P]/I$.
The ring $R_P$ is the same as the ring defined in the following manner:
$R_P$ is the $\KK$-vector space whose basis is $\Set{X_x|x\in P}$
\begin{align*}
\begin{cases}
X_xX_y-X_{x\vee y} &(\text{if there exists $x\vee y$  and $\rho(x\vee y)=\rho(x)+\rho(y)$}),\\
 X_xX_y=0 &(\text{otherwise}).
\end{cases}
\end{align*}
\begin{theorem}
For the map $\eel$ from $\Mor(\tilde P)$ to $P$
defined by $\eel(f^d_{x,y})=d$, 
the schemoid algebra for the schemoid
$(\tilde P,P,\eel)$
is isomorphic to $R_P$. 
\end{theorem}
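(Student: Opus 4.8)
The plan is to exhibit an explicit $\KK$-linear bijection between the schemoid algebra and $R_P$ that carries one multiplication table onto the other; concretely, I would set $\Phi(X_d)=\overline{d}$ for $d\in P$ and check that $\Phi$ is a unital algebra isomorphism.

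First I would unwind the definitions. For $d\in P$ we have $\overline{d}=\sum f^d_{x,x\vee d}$, the sum over those $x$ for which $f^d_{x,x\vee d}$ is a morphism of $\tilde P$, i.e. for which $x\vee d$ exists and $\rho(x\vee d)=\rho(x)+\rho(d)$. The choice $x=\hat 0$ is always admissible, so $\overline{d}$ is a nonzero element of $\KK[\tilde P]$; and since morphisms with different $\eel$-values are distinct basis vectors of $\KK[\tilde P]$, the $\overline{d}$ are $\KK$-linearly independent. Assuming $P$ finite (so that $\KK[\tilde P]$ and the schemoid algebra are unital $\KK$-algebras), this already gives $\dim_\KK\KK(\tilde P,P,\eel)=|P|=\dim_\KK R_P$, so it will suffice to produce a surjective algebra homomorphism $\Phi$.

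Next I would compute $\overline{c}\cdot\overline{d}=\sum_{h\in\Mor(\tilde P)}|N_h^{c,d}|\,h$. Two observations. (a) For any $h$, a pair $(f,g)\in N_h^{c,d}$ is determined by $g$ — which must satisfy $\eel(g)=d$ and have source equal to that of $h$, hence $g=f^d_{x,x\vee d}$ where $x$ is the source of $h$ — and then $f$ is forced; so $|N_h^{c,d}|\le 1$ always. (b) By (the proof of) Theorem \ref{thm:extendedposet}, every composition $f^c_{y,z}\circ f^d_{x,y}$ equals $f^{c\vee d}_{x,x\vee c\vee d}$ with $c\vee d$ existing and $\rho(c\vee d)=\rho(c)+\rho(d)$; in particular $\overline{c}\cdot\overline{d}$ is supported on $\eel^{-1}(\{c\vee d\})$ when $c\vee d$ exists, and is $0$ otherwise. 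If $c\vee d$ exists and $\rho(c\vee d)=\rho(c)+\rho(d)$, then $f^c_{d,c\vee d}\circ f^d_{\hat 0,d}=f^{c\vee d}_{\hat 0,c\vee d}$ is a genuine composition, so $|N_h^{c,d}|=1$ for some, and hence by the schemoid property for every, morphism $h$ with $\eel(h)=c\vee d$; thus $\overline{c}\cdot\overline{d}=\overline{c\vee d}$. If instead $c\vee d$ exists but $\rho(c\vee d)<\rho(c)+\rho(d)$ (the only remaining case, since $\rho(c\vee d)\le\rho(c)+\rho(d)$ by hypothesis), then no composition of the required form exists and $\overline{c}\cdot\overline{d}=0$. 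This is exactly the multiplication rule defining $R_P$; moreover $\overline{\hat 0}=\sum_{x\in P}\id_x$ is the unit of $\KK[\tilde P]$, matching $X_{\hat 0}=1$.

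Finally I would conclude: the linear map $\Phi$ with $\Phi(X_d)=\overline{d}$ respects products and units by the previous step, hence is a homomorphism of $\KK$-algebras, and it sends a basis bijectively onto a basis, hence is an isomorphism. Equivalently, one could define the ring homomorphism $\KK[X_x\mid x\in P]\to\KK(\tilde P,P,\eel)$, $X_x\mapsto\overline{x}$, and check that every generator in $G_0\cup G_1\cup G_1'\cup G_2$ maps to $0$, so that it descends to $R_P$; the verification is the same computation. The one place where real care is needed is this product computation — identifying precisely which $h$ receive coefficient $1$ — but the bulk of that is already carried out in the proof of Theorem \ref{thm:extendedposet}; the only new ingredient is the remark that the source $\hat 0$ realizes the generic composition, after which the schemoid axiom propagates the coefficient $1$ across the whole fiber $\eel^{-1}(\{c\vee d\})$.
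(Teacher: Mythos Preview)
The paper states this theorem without proof, so there is no argument to compare against. Your proposal is correct and is the natural way to establish the result: the key computation is that
\[
\overline{c}\cdot\overline{d}=
\begin{cases}
\overline{c\vee d} & \text{if $c\vee d$ exists and }\rho(c\vee d)=\rho(c)+\rho(d),\\
0 & \text{otherwise,}
\end{cases}
\]
which you obtain cleanly by combining the observation $|N_h^{c,d}|\le 1$ (forced by the source of $h$) with the witness $f^c_{d,c\vee d}\circ f^d_{\hat 0,d}$ in the good case and the rank obstruction in the bad case. The identification $\overline{\hat 0}=\sum_x\id_x$ handles the unit, and the linear independence of the $\overline d$ (via the nonempty, pairwise disjoint fibers $\eel^{-1}(\{d\})$) together with $\dim_\KK R_P=|P|$ finishes the isomorphism. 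Your explicit finiteness assumption on $P$ is appropriate: the paper only asserts that $\KK[\CCC]$ is unital when $\Obj(\CCC)$ is finite, and the very definition of the schemoid algebra requires each fiber $\eel^{-1}(\{d\})$ to be finite, so this hypothesis is implicit in the statement.
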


\begin{example}
Let $P$ be the $n$-th Boolean lattice $2^{[n]}$.
In this case,
the set of morphisms 
is 
\begin{align*}
\Set{f^{d}_{x,x\cup d}| x,d \subset [n],\ x\cap d =\emptyset}. 
\end{align*}
Hence $(\tilde P,P,\eel)$ is $(\CCC_P,P,\eel)$ in Example \ref{example:boolean}.
\end{example}

\begin{example}
Let $K$ be a finite field.
Consider the poset $P$ of all subspaces in $K^n$ 
ordered by the inclusion.
In this case,
the set of morphisms 
is 
\begin{align*}
\Set{f^{W}_{V,V\oplus W}| V,W\in P,\ V\cap W=0}. 
\end{align*}
\end{example}

\begin{example}
Let $P$ be the poset of flats of a matroid $M$ ordered by inclusion.
Assume that $P$ satisfies the conditions in this section.
In this case, 
the schemoid algebra for  $(\tilde P,P,\eel)$ is 
isomorphic to
the algebra defined in Maeno--Numata \cite{arXiv:1107.5094},
which is M\"obius algebra with
the relations $x^2_i=0$ for all variables $x_i$.
\end{example}

\end{document}